\newtheorem{theorem}{Theorem}[section]
\newtheorem{e-proposition}[theorem]{Proposition}
\newtheorem{remark}{\it Remark\/}
\newtheorem{example}{\it Example\/}
\def\Sym{\mathrm{Sym}}
\def\NCSF{\mathbf{NCSF}}
\def\WSym{\mathbf{WSym}}
\def\CWSym{\mathbf{CWSym}}
\def\std{\mathrm{std}}
\def\<{\langle}
\def\>{\rangle}
\def\C{{\mathbb C}}
\def\N{{\mathbb N}}
\def\A{{\sf A}}
\def\A{{\mathbb A}}
\def\X{{\mathbb X}}
\def\sp{\mathrm{span}}
\def\binom#1#2{\left(#1\atop#2\right)}
\begin{document}
\title{$r$-Bell polynomials in combinatorial Hopf algebras}
\author{Ali Chouria\footnote{ali.chouria1@univ-rouen.fr, ali.chouria@yahoo.fr. Laboratoire LITIS - EA 4108, Université de Rouen, Avenue de l'Université - BP 8,
76801 Saint-\'Etienne-du-Rouvray Cedex }  and Jean-Gabriel Luque\footnote{jean-gabriel.luque@univ-rouen.fr. Laboratoire LITIS - EA 4108, Université de Rouen, Avenue de l'Université - BP 8, 76801 Saint-Étienne-du-Rouvray Cedex }}
\maketitle

\begin{abstract}
\selectlanguage{english} 
We introduce partial $r$-Bell polynomials in three combinatorial Hopf algebras. We prove a factorization formula for the generating functions which is a consequence of the Zassenhauss formula.
\vskip 0.5\baselineskip

\noindent{\bf R\'esum\'e} \vskip 0.5\baselineskip \noindent
{\bf Les $r-$polyn\^omes de Bell dans des alg\`ebres de Hopf combinatoires}\\
Nous d\'efinissons des polyn\^{o}mes $r$-Bell partiels dans trois alg\`ebres de Hopf combinatoires. Nous prouvons une formule de factorisation pour les fonctions g\'en\'eratrices qui est une cons\'equence de la formule de Zassenhauss. 
\end{abstract}

\noindent{\footnotesize {\bf Keywords:} {$r-$Bell polynomials, Munthe-Kaas polynomials, Set partition, Bicolored partitions, Hopf algebras, Bisymmetric functions, Noncommutative bisymmetric functions, Zassenhauss formula, Generating functions.}

\section{Introduction \label{sec1}}
\emph{Partial multivariate Bell polynomials} have been defined by E.T. Bell \cite{bell1934exponential} in 1934.
Their applications in Combinatorics, Analysis, Algebra, Probabilities \emph{etc.} are numerous (see \emph{e.g} \cite{ThesisMih}).
They are usually defined on an infinite set of commuting variables $\{a_1,a_2,\ldots\}$ by the following generating function:
\begin{myequation}\label{Bell2}
\sum_{n \geqslant 0} B_{n,k}(a_{1},\dots,a_{p},\dots) \frac{x^{n}}{n!}t^{k} = \exp\left\{\sum_{m \geqslant 1} a_{m} \frac{x^{m}}{m!}t\right\}.
\end{myequation}

The partial Bell polynomials are related to several combinatorial sequences. 
Let ${n \brace k}$ denotes the Stirling number of second kind which counts the number of ways to partition a set of $n$ objects into $k$ nonempty subsets and let ${n \brack k}$ denotes the Stirling number of first kind  which counts the number of permutations according to their number of cycles.
We have,
$B_{n,k}(1,1, \dots)=  {n \brace k}$ and $B_{n,k}(0!,1!,2!, \dots)= {n \brack k}$.

The connection between the Bell polynomials and the combinatorial Hopf algebras has been investigated by one of the authors in \cite{Chouria}.

In the aim to generalize these polynomials, \emph{Mihoubi et al.}  \cite{mihoubi2013partial} defined partial $r-$Bell polynomials  by setting
\begin{myequation}\label{th3h}
B^{r}_{n+r,k+r}(a_1, a_2, \cdots; b_1, b_2, \cdots) =  
\sum_{\substack{ n'+n'' = l \\ k'+k'' = k}} \sum_{\substack{ \lambda'_1 + \cdots + \lambda'_{r} = n' \\ \lambda''_1 + \cdots + \lambda''_{n-r} = n''}} \alpha_{\lambda', \lambda''}^{r} a_{\lambda'_1} \cdots a_{\lambda'_{r}} b_{\lambda''_1 \cdots b_{\lambda''_{n-r}}},
\end{myequation}
where $\alpha_{\lambda', \lambda''}^{r}$ is the number of sets partition $\pi =  \{ \pi'_1, \pi'_2, \cdots,  \pi'_r, \pi''_1, \pi''_2, \cdots,  \pi''_{k-r}\}$ of $\{1,2, \cdots, n\}$ such that 
$\# \pi'_1 = \lambda'_1, \cdots, \# \pi'_{r} = \lambda'_{r}, \# \pi''_1 = \lambda''_1, \cdots, \# \pi''_{n-r} = \lambda''_{n-r}$ and $1 \in  \pi'_1, 2 \in  \pi'_2, \cdots, r \in  \pi'_r$ and $\# \pi_i$ denotes the cardinalty of $\pi_i$. 
Comparing our notation to those of \cite{mihoubi2013partial},  the role of the variables $a_{i}$ and $b_{i}$ have been switched. The generating function of the $r-$Bell polynomials is known to be
\begin{myequation}\label{th3Mih} \begin{array}{l}
\sum_{n \geqslant k} B^{r}_{n+r,k+r}(a_1, a_2, \cdots; b_1, b_2, \cdots) \frac{x^n}{n!} \frac{y^r}{r!} t^k \\ 
= \exp\left(\sum_{j \geqslant 0} a_{j+1} \frac{x^j }{j!}y\right)\exp\left( \sum_{j \geqslant 1} b_j \frac{x^j t}{j!}\right),
\end{array} \end{myequation}
where $(a_n; n \geqslant 1)$ and $(b_n; n \geqslant 1)$ are two sequences of nonnegative integers. 

The aim of our paper is to show that we can define three versions of the $r$-Bell polynomials in three combinatorial Hopf algebras in the same way. The first algebra is $\Sym^{(2)}$, the algebra of bisymmetric functions (or symmetric functions of level $2$). The $r$-Bell polynomials as defined by \emph{Mihoubi} belong to this algebra. 
 The second algebra is $\NCSF^{(2)}$, the algebra of noncommutative bisymmetric functions. In this algebra, we define non commutative analogues of  $r$-Bell polynomials which generalize the Munthe-Kaas polynomials. The third algebra is the $\WSym^{(2)} := \CWSym(2,2,\cdots)$, the algebra of $2$-colored word symmetric functions. In this algebra, we define word analogue of $r$-Bell polynomials. The common of the three constructions is that they are based on the same algorithm which generates $2$-colored set partitions without redundance. Our main result is a factorization formula for the generating function which holds in the three algebras and which is a consequence of the Zassenhauss formula.

\section{Bi-colorations of partitions, compositions and set partitions}\label{Partie_1}
A bicolored partition $\lambda$ of $n$ is a multiset $\{(\lambda_{1},j_{1}),\dots,(\lambda_{k},j_{k})\}$ such that $\lambda_{1}+\cdots+\lambda_{k}=n$ and $j_{1},\dots,j_{k}\in \{1,2\}$. We set $\lambda\vdash n$, $\omega(\lambda)=n$ and $\ell(\lambda)=k$. A bicolored composition $I$ of $n$ is a list $I=[(i_{1},j_{1}),\dots,(i_{k},j_{k})]$ with $i_{1}+\cdots + i_{k}=n$ and  $j_{1},\dots,j_{k}\in \{1,2\}$. We set $I\vDash n$, $\omega(I)=n$ and $\ell(I)=k$.
A bicolored set partition is a set $\pi=\{(\pi_1,j_{1}),\dots,(\pi_k,j_{k})\}$ such that $\{\pi_{1},\dots,\pi_{k}\}$ is a set partition of $n$ and  $j_{1},\dots,j_{k}\in \{1,2\}$. We set $\pi\Vdash n$, 
$\omega(\pi)=n$ and $\ell(\pi)=k$.

We define 
\begin{myequation}\label{Partition1} \begin{array}{l}
S_{n+r,k+r}^{r} = \{   \pi=\{ (\pi_1,1), \cdots,  (\pi_{r},1),  (\pi_{r+1},2),  \cdots,  (\pi_{k+r},2)\} : \\ \pi\Vdash (n+r), 1 \in \pi_1, 2 \in \pi_2, \cdots, r \in \pi_r  \}.
\end{array} \end{myequation}

We have
\begin{myequation} 
S_{r,r}^{r} = \left\lbrace  \{(\{1\},1), (\{2\},1), \cdots, (\{r\},1)\} \right \rbrace 
\end{myequation} 
and
\begin{myequation}\label{resS} \begin{array}{l}
 S_{n+1+r,k+r}^{r} = \left\{ \pi \cup  \{(n+1,2)\} : \pi \in S_{n+r,r+k-1}^{r} \right\}\cup
 \\
  \{ \pi\setminus \{(\pi_{\ell},j_{\ell})\}\cup\{(\pi_{\ell}\cup\{n+1\},j_{\ell})\} :  \\
    \pi=\{(\pi_1,j_{1}), \cdots,  (\pi_{r+k},j_{r+k}), 1\leq\ell\leq r+k\} 
    \in S_{n+r,k}^{r+k} \}. 
 \end{array}\end{myequation}
The underlying recursive algorithm generates one and only one times each element of $ S_{n+1+r,k+r}^{r}$.
We consider also two applications: $c(\pi)=[(\#\pi_{1},j_{1}),\dots,(\#\pi_{k},j_{k})]$ if $\pi=\{(\pi_{1},j_{1}),\dots,(\pi_{k},j_{k)}\}$
 with $\min\{\pi_{1}\}<\cdots<\min\{\pi_{k}\}$ and 
$\lambda(\pi)=\{(\#\pi_{1},j_{1}),\dots,(\#\pi_{k},j_{k})\}$. 
We define 
\begin{myequation}
f^{r}_{n+r,k+r}(I)= \# \{\pi\in S_{n+1+r,k+r}^{r}:c(\pi)=I\} 
\end{myequation}
 and 
\begin{myequation} 
 g^{r}_{n+r,k+r}(\lambda)= \#\{\pi\in S_{n+1+r,k+r}^{r}:\lambda(\pi)=\lambda\}.
\end{myequation}

\section{Three combinatorial Hopf algebras}\label{Partie_3}
\subsection{Algebras of symmetric functions of level 2}
In this section, we define three combinatorial Hopf algebras indexed by bicolored objects.
 The model of construction is the algebra $\Sym^{(l)}$ which is the representation ring of a wreath product $(\Gamma\wr \mathfrak S_{n})_{n\geq 0}$, $\Gamma$ being a group with $l$ conjugacy classes \cite{macdonald1998symmetric}. Let us recall briefly its definition for $l=2$.
The combinatorial Hopf algebra $\Sym^{{(2)}}$ \cite{macdonald1998symmetric} is naturally realized as symmetric functions in $2$ independent sets of variables $\Sym^{(2)}:= \Sym(\X^{1};\X^{2})$. It is the free commutative algebra generated by two sequences of formal symbols $p_{1}(\X^{(1)}), p_{2}(\X^{(1)}),\dots$ and $p_{1}(\X^{(2)}), p_{2}(\X^{(2)}),\dots$, named power sums,  which are primitive for its coproduct. The set of the polynomials $p^{\lambda}:=p_{\lambda_{1}}(\X^{(1)})\cdots p_{\lambda_{k}}(\X^{(k)})$ where $\lambda=\{(\lambda_{1},i_{1}),\dots,(\lambda_{2},i_{2})\}$  is a bicolored partition. 

The Hopf algebra $\NCSF$ of formal noncommutative symmetric functions \cite{ncsf} is the free associative algebra $\mathbb C\left\langle \Psi_1, \Psi_2, \cdots \right\rangle$ generated by an infinite sequence of primitive formal variables $(\Psi_i)_{i \geqslant 1}$. Its level $l$ analogous is described in  \cite{NT} as the free algebra generated by level $l$ complete homogeneous functions $S_{\mathbf n}$. 
Here we set $l=2$ and we use another basis. We recall that the level $2$ complete homogeneous function $S_{\mathbf n}$, for $\mathbf n\in\N^{2}$, is defined as a free quasi-symmetric function of level $2$ as 
$S_{\mathbf n}=\sum_{|u_{i}|=n_{i}}\mathbf G_{1\dots n,u}$ where $\mathbf G_{\sigma,u}$ denotes the dual free $l$-quasi-ribbon labeled by the colored permutation $(\sigma,u)$ \cite{NT}. 
Notice that $\mathbf G_{\sigma,u}$ is realized as a polynomial in $\C\langle \mathbb A^{(1)}\cup \mathbb A^{(2)}\rangle$, where $\mathbb A^{(i)}$ denotes two disjoint copies of the same alphabet $\mathbb A$
as
\begin{myequation}
\displaystyle\mathbf{ G}_{\sigma,u}=\sum_{w\in (\mathbb A^{(1)}\cup\mathbb A^{(1)})^{n} \atop std(w)=\sigma,w_{i}\in \mathbb A^{(i)}} w
\end{myequation}

 where $\std$ is the usual standardization applied after identifying the two alphabets $\mathbb A^{(1)}$ and $\mathbb A^{(2)}$.
Alternatively, for dimensional reasons, $\NCSF^{(2)}$ is the minimal sub (free) algebra of $\mathbb C\langle\mathbb A^{(1)}\cup \mathbb A^{(2)}\rangle$ containing both $\NCSF(\mathbb A^{(1)})$ and $\NCSF(\mathbb A^{(2)})$ as subalgebras. Hence, it is freely generated by the (primitive) power sums $\Psi_{i}(\mathbb A^{(j)})$.
 If $I=[(i_{1},j_{1}),\dots,(i_{k},j_{k})]$ denotes a bi-colored composition, then the set of the polynomials  $\Psi^{I}=\Psi_{i_{1}}(\mathbb A^{(j_{1})})\cdots{} \Psi_{i_{k}}(\mathbb A^{(j_{k})})$ is a basis of the space $\NCSF^{(2)}$.

The last algebra, $\WSym^{(2)}$, is a level $2$ analogue of the algebra of word symmetric functions introduced by \emph{M. C. Wolf} \cite{wolf1936symmetric} in 1936. We construct it as a special case  of the Hopf algebras $\CWSym(a)$ of colored set partitions introduced in \cite{BellHopf} for $a=(2,2,\dots,2,\dots)$.
As a space $\CWSym(a)$ is generated by the set $\Phi^\pi$ where $\pi$ denotes a bicolored set partition. 
Its product is defined by
\begin{myequation}\label{Cwsymproduct}
\Phi^\pi \Phi^{\pi'}=\Phi^{\pi\hat\cup\pi'},
\end{myequation}
where $\hat\cup$ denotes the shifted union obtained by shifting first the elements of $\pi'$ by the weight of $\pi$ and hence compute the union, and its coproduct is
\begin{myequation}\label{Cwsymcoproduct}
\Delta(\Phi^\pi) = \sum\limits_{\hat\pi_1\cup\hat\pi_2=\pi\atop \hat\pi_1\cap\hat\pi_2=\emptyset}\Phi^{\std(\hat\pi_1)}\otimes\Phi^{\std(\hat\pi_2)},
\end{myequation}
where the \emph{standardized} $\std(\pi)$ of $\pi$ is defined as the unique colored set partition obtained by replacing the $i$th smallest integer in the $\pi_j$ by $i$. 

The algebra $\Sym^{(2)}$ (resp. $\NCSF^{(2)}$, $\WSym^{(2)}$) is naturally bigradued $\Sym^{(2)}=\bigoplus_{n,k}\Sym^{(2)}_{n,k}$ (resp. $\NCSF^{(2)}=\bigoplus_{n,k}\NCSF^{(2)}_{n,k}$, $\WSym^{(2)}=\bigoplus_{n,k}\WSym^{(2)}_{n,k}$) where $\Sym^{(2)}_{n,k}=
\sp\{p^{\lambda}:\ell(\lambda)=k, \omega(\lambda)=n\}$ (resp. 
$\NCSF^{(2)}_{n,k}=\sp\{\Psi^{I}:\ell(I)=k, \omega(I)=n\}$,\\ $\WSym^{(2)}_{n,k}=\sp\{\Phi^{\pi}:\ell(\pi)=k,{} \omega(\pi)=n\}$).
We  denote by $\mathbb R$ the subalgebra of $\Sym^{(2)}$ (resp. $\NCSF ^{(2)}$, $\WSym^{(2)}$) spanned by the polynomials $p^{\{(\lambda_{1},2),\dots,(\lambda_{k},2)\}}$ (resp. $\Psi^{[(i_{1},2),\dots,(i_{k},2)]}$, $\Phi^{\{(\pi_{1},2),\dots,(\pi_{k},2)\}}$) which is isomorphic to $\Sym$ (resp. $\NCSF$, $\WSym$). Notice also that $\mathbb R=\bigoplus_{n,k}\mathbb R_{n,k}$ is naturally bigraded.

In the rest of the paper, when there is no ambiguity, we  use $a_i$ to refer to $p_i(\X^{(1)})$, $\Psi_i(\A^{(1)})$ or $\Phi^{\{(\{1,\dots,n\},1)\}}$ and $b_i$ to refer to $p_i(\X^{(2)})$, $\Psi_i(\A^{(2)})$ or $\Phi^{\{(\{1, \dots,n\},2)\}}$.
 Notice that with this notation all the $a_{i}$ and the $b_{i}$ are primitive elements.
  We define the natural linear map $\Xi:\WSym^{(2)}\rightarrow \NCSF^{(2)}$ and $\xi:  \WSym^{(2)}\rightarrow \Sym^{(2)}$ by $\Xi(\Phi^{\pi})=\Psi^{c(\pi)}$ and $\xi(\Phi^{\pi})=p^{\lambda(\pi)}$. Notice that these maps are morphisms of Hopf algebras.

\subsection{$r-$Bell polynomials and (commutative/ noncommutative/ word) symmetric functions}
In $\Sym^{(2)}$ and $\NCSF^{(2)}$, we define the operator $\partial$ as the unique derivation acting on its left and satisfying $a_i \partial= a_{i+1}$ and $b_i \partial=b_{i+1}$. In $\WSym^{(2)}$, we define $\partial$ as the operator acting linearly on the left by $1\partial=0$ and
\begin{myequation}
\Phi^{\{[\pi_1, i_1],\dots,[\pi_k, i_k]\}}\partial=\sum_{j=1}^k\Phi^{(\{[\pi_1, i_1],\dots,[\pi_k,i_k]\}\setminus [\pi_j,i_j])\cup\{[\pi_j\cup\{n+1\}, i_j]\}}.
\end{myequation}
In the three algebras, we define  $r-$Bell polynomials in a similar way to Ebrahimi-Fard \emph{et al.}
 defined Munthe-Kaas polynomials, that is by the use of the operator $\partial$. More precisely, the polynomial $B_{n+r,k+r}^r$
 is the coefficient of
 $t^{k}$ in $a_1^r (t b_1 + \partial)^{n}.$
In $\WSym^{(2)}$, from (\ref{resS}), we have 
\begin{myequation}
\displaystyle B^{r}_{n+r,k+r}=\sum_{\pi\in S_{n+r,k+r}^{r}}\Phi^{\pi}.
\end{myequation}
Hence, using the map $\Xi$ and $\xi$ we obtain 
\begin{myequation}
\displaystyle B^{r}_{n+r,k+r}=\sum_{\pi\in S_{n+r,k+r}^{r}}p^{\lambda(\pi)}=
\sum_{\lambda}g^{r}_{n+r,k+r}(\lambda)p^{\lambda}
\end{myequation}
in  $\Sym^{(2)}$ and
\begin{myequation}
\displaystyle B^{r}_{n+r,k+r}=\sum_{\pi\in S_{n+r,k+r}^{r}}\Psi^{\lambda(\pi)}=
\sum_{I}f^{r}_{n+r,k+r}(I)\Psi^{I}
\end{myequation}
 in $\NCSF^{(2)}$.
  Notice that in $\Sym^{{(2)}}$, $B^{r}_{n+r,k+r}$ is nothing but the classical $r$-Bell polynomial and in $\NCSF^{(2)}$, it is a $r$-version of Munthe-Kaas polynomial \cite{MK}.

\begin{example}\rm
In $\WSym^{(2)}$, we have 
\begin{align*}
B^2_{4,3} &=\Phi^{\{(\{1,3\},1),(\{2\},1),(\{4\},2)\}} + \Phi^{\{(\{1,4\},1),(\{2\},1),(\{3\},2)\}}
 + \Phi^{\{(\{1\},1),(\{2,3\},1),(\{4\},2)\}} 
 \\& +  \Phi^{\{(\{1\},1),(\{2,4\},1),(\{3\},2)\}} + \Phi^{\{(\{1\},1),(\{2\},1),(\{3,4\},2)\}}.
\end{align*}
In $\NCSF^{(2)}$, we have 
\begin{align*}
B^2_{4,3}=2\Psi^{[(2,1),(1,1),(1,2)]}+2\Psi^{[(1,1),(2,1),(1,2)]}+\Psi^{[(1,1),(1,1),(2,2)]}=2a_{2}a_{1}b_{2}+
2a_{1}a_{2}b_{1}+a_{1}a_{1}b_{2}.
\end{align*}
In $\Sym^{(2)}$,
$B^2_{4,3}=4p^{\{(2,1),(1,1),(1,2)\}}+p^{\{(1,1),(1,1),(2,2)\}}=4a_{2}a_{1}b_{2}+a_{1}a_{1}b_{2}$.
  \end{example}
  We consider also the polynomials $\tilde B^{r}_{n+k+r,k+r}=a_{1}^{r}b_{1}^{k}\partial^{n}.$ Notice that in $\WSym^{(2)}$, 
  we have
  \begin{myequation}
  \displaystyle \tilde B^{r}_{n+k+r,k+r}=\sum_{\{(\pi_1,1),\dots,(\pi_{k+r},1)\atop 1\in\pi_{1},\dots,r\in\pi_{r}\}\in S^{k+r}_{n+k+r,k+r}}\Phi^{\{(\pi_1,1),\dots,(\pi_{r},1),(\pi_{r+1},2),\dots,(\pi_{r+k},2)\}}.
  \end{myequation}

\section{Generating functions}

We consider the following generating functions:
\begin{myequation}\label{Stxy}
S(t,x,y) = \sum_{n,r,k} B_{n+r,k+r}^{r} \frac{x^n}{n!} \frac{y^r}{r!} t^k 
=  \exp{(a_1 y)} \exp{(x(t b_1 + \partial))},
\end{myequation}

\begin{myequation}\label{Scirc}
S^\circ(t,x) = S(t,x,0) = \sum_{n,k} B_{n,k} \frac{x^n}{n!} t^k 
=  1. \exp{(x(t b_1+ \partial))},
\end{myequation}

\begin{myequation}
S^\bullet (t,x,y) = \sum_{n,r,k} \tilde B_{n+k+r,k+r}^{r} \frac{x^n}{n!} \frac{y^r}{r!} \frac{t^k}{k!}  
=  \exp{(a_1 y)} \exp{(t b_1 )} \exp{(x\partial)},
\end{myequation}
and
\begin{myequation}
S^*(x,y) =  \sum_{n,r} B_{n+r,r}^{r} \frac{x^n}{n!} \frac{y^r}{r!}  
= \exp{(yb_1)} \exp{(x\partial)}.
\end{myequation}

\begin{theorem}\label{th1}
The generating functions $S(t,x,y)$ and $S^\circ(t,x)$ satisfy the following factorization
\begin{myequation}\label{Zass}
S(t,x,y) = S^\bullet(xt,x,y) Z(x,t)\mbox{ and }S^\circ(t,x)=S^*(x,xt) Z(x,t)
\end{myequation}
where  $Z(x,t) = \prod_{n \geqslant 2} \exp{(x^n \sum_k t^k C_{n,k})}$, $C_{n,k} = \frac{(-1)^{n+1}}{n}\frac{1}{k!(n-k-1)!} ad_{\partial}^{n-k-1} ad_{b_1}^{k} \partial$ and $ad_{x}$ is the derivation
 $ad_{x} P = [x,P] = x P - P x$.{}
 In $\Sym^{(2)}$ and $\NCSF^{(2)}$ the operator $C_{n,k}$ is the multiplication by a primitive polynomials belonging to the subalgebra $\mathbb R_{n,k}$.
 
\end{theorem}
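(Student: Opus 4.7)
The plan is to reduce both factorizations to a single identity in the operator algebra and then invoke the Zassenhaus formula. Because $\exp(a_1y)$ is the leftmost invertible factor of both $S(t,x,y)$ and $S^\bullet(xt,x,y)$, the first identity $S=S^\bullet Z$ reduces to
\[
\exp\bigl(x(tb_1+\partial)\bigr)=\exp(xtb_1)\exp(x\partial)\,Z(x,t),
\]
and setting $y=0$ gives the second identity, since $S^*(x,xt)=\exp(xtb_1)\exp(x\partial)=S^\bullet(xt,x,0)$.

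For the reduced identity, I would apply the Zassenhaus formula $\exp(X+Y)=\exp(X)\exp(Y)\prod_{n\geq 2}\exp(E_n)$ with the substitution $X=xtb_1$, $Y=x\partial$. Each $E_n$ is a homogeneous Lie polynomial of degree $n$; collecting the factors $x^nt^k$ carried by $k$ copies of $X$ and $n-k$ copies of $Y$ yields $E_n=x^n\sum_k t^kL_{n,k}$, so the theorem is equivalent to $L_{n,k}=C_{n,k}$. In our operator algebra, the identity $[\partial,M_f]=M_{\partial f}$ (since $\partial$ is a derivation, with $M_f$ denoting left multiplication by $f$) together with $[b_1,\partial]=-M_{b_2}$ collapses every iterated commutator in $b_1$ and $\partial$ to multiplication by a single Lie polynomial in the $b_i$'s. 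Using these collapses and Jacobi, I would verify that $L_{n,k}$ acquires the right-nested presentation $\mathrm{ad}_\partial^{n-k-1}\mathrm{ad}_{b_1}^k\partial$, and I would pin down the scalar $\frac{(-1)^{n+1}}{n\,k!(n-k-1)!}$ by induction on $n$ — most cleanly by differentiating $F(x,t):=\exp(-x\partial)\exp(-xtb_1)\exp\bigl(x(tb_1+\partial)\bigr)$ in $x$ and matching $\log F$ order by order.

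For the last assertion of the theorem, the collapse yields $C_{n,k}=\frac{(-1)^n}{n\,k!(n-k-1)!}M_{\partial^{n-k-1}\mathrm{ad}_{b_1}^{k-1}b_2}$. The polynomial $\partial^{n-k-1}\mathrm{ad}_{b_1}^{k-1}b_2$ (i) lies in $\mathbb R$ because only the letters $b_i$ appear and $\partial$ sends $b_i\mapsto b_{i+1}$; (ii) has weight $2+(k-1)+(n-k-1)=n$ and length $k$ in the $b$-alphabet, placing it in $\mathbb R_{n,k}$; (iii) is primitive, because $b_2$ is primitive, iterated Lie brackets of primitives remain primitive (primitives form a Lie subalgebra of any Hopf algebra), and $\partial$ preserves this Lie subalgebra — on iterated brackets of the power sums $b_i$ the Leibniz rule makes $\partial$ act as a Lie-derivation.

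The main obstacle is the collapse onto the right-nested form in degrees $n\geq 5$: for a free pair $(X,Y)$ the generic Zassenhaus exponent $E_n$ is not expressible as $\sum_k c_{n,k}\,\mathrm{ad}_Y^{n-k-1}\mathrm{ad}_X^k Y$ (by a dimension count in the free Lie algebra on two generators), so the compact closed form of the theorem depends essentially on the operator-algebraic relations noted above. Carrying out this collapse uniformly in $n$ and $k$ and extracting the exact scalar coefficients is the technical heart of the proof.
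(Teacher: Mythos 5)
Your proposal takes the same route as the paper, whose entire proof reads: the factorizations follow from the Zassenhaus formula; $ad_{\partial}^{i} ad_{b_1}^{j} \partial$ is primitive because $\partial$ is a derivation; and it lies in $\mathbb R_{n,k}$ because $[b_i,\partial]=b_{i+1}$. Your cancellation of the invertible factor $\exp(a_1y)$, reducing both identities to $\exp(x(tb_1+\partial))=\exp(xtb_1)\exp(x\partial)\,Z(x,t)$, your primitivity argument (iterated brackets of primitives are primitive and $ad_\partial$ preserves them), and your weight/length count placing the collapsed polynomial in $\mathbb R_{n,k}$ reproduce exactly these three sentences in more detail. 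Where you go beyond the paper is also where you stop: you correctly observe that for $n\geq 5$ the generic Zassenhaus exponent is not a multiple of the single right-nested bracket $ad_{\partial}^{n-k-1}ad_{b_1}^{k}\partial$ in each bidegree (already in bidegree $(2,3)$ the free Lie algebra on two generators is two-dimensional), so the stated closed form of $C_{n,k}$ genuinely depends on the relations $[b_i,\partial]=b_{i+1}$ and on the inductive coefficient extraction that you outline but do not carry out. The paper does not carry it out either --- it only cites Zassenhaus --- so your sketch is no less complete than the published argument; if you do execute it, note that your convention $[\partial,M_f]=M_{\partial f}$ is opposite in sign to the paper's $[b_i,\partial]=b_{i+1}$, which is why your collapsed prefactor $(-1)^n$ disagrees with the $(-1)^k$ appearing in the paper's explicit expansion (\ref{Cnk}).
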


\begin{proof}
Equalities (\ref{Zass}) are obtained from (\ref{Stxy}) and (\ref{Scirc}) by using \emph{Zassenhaus} formula \cite{magnus1954}.
 In $\Sym^{(2)}$ and $\NCSF^{(2)}$, since $\partial$ is a derivation, $ad_{\partial}^{i} ad_{b_1}^{j} \partial$ is primitive. 
 Furthermore,  remarking that $[b_i, \partial] = b_{i+1}$, we prove that $ad_{\partial}^{i} ad_{b_1}^{j} \partial\in\mathbb R_{n,k}$.
\end{proof}

\begin{example}
	\rm{}
	In $\NCSF^{(2)}$, consider the coefficient of $\frac {x^{3}}{3!}\frac {y^{2}}{2!}t$ in the left equality of (\ref{Zass}). In the left hand side we find $B^{2}_{5,3}=3a_{2}a_{1}b_{1}^{2}+3a_{1}a_{2}b_{1}^{2}+2a_{1}^{2}b_{2}b_{1}+a_{1}^{2}b_{1}b_{2}$. 
	The same coefficient in the right hand sides is
	${}
	3\tilde B^{2}_{5,4}-3\tilde B^{2}_{3,3}C_{2,1}+3!\tilde B^{2}_{2,2}C_{3,2}.
	$\\
	 Since $\tilde B^{2}_{5,4}=a_{2}a_{1}b_{1}^{2}+a_{1}a_{2}b_{1}^{2}+a_{1}^{2}b_{2}b_{1}+a_{1}^{2}b_{1}b_{2}$, $\tilde B_{3,3}^{2}=a_{1}^{2}b_{1}$, $\tilde B_{2,2}^{2}=a_{1}^{2}$, $C_{2,1}=-\frac12 b_{2}$, and
	$C_{3,2}=\frac1{3!}[b_{1},b_{2}]$, we check that $3\tilde B^{2}_{5,4}-3\tilde B^{2}_{3,3}C_{2,1}+3!\tilde B^{2}_{2,2}C_{3,2}=B^{2}_{5,3}$ as expected by Theorem (\ref{th1}).
\end{example}\ \\
In $\NCSF^{(2)}$, we  compute explicitly the polynomial $C_{n,k}$
\begin{myequation}\label{Cnk}
	C_{n,k}= \frac{(-1)^{k}}n\frac{1}{k!(n-k-1)!}
	 \sum_{i_{1},\dots,i_{k}}\binom{n-k-1}{i_{1}-1,\dots,i_{k-1}-1,i_k-2}
	 [b_{i_{1}},[b_{i_{2}},\cdots,[b_{i_{k-1}},b_{i_{k-1}}]\cdots].
\end{myequation}
\begin{example}\rm
	Consider for instance the polynomial $C_{5,2}$ in $\NCSF^{(2)}$
	\begin{myequation}
		\begin{array}{rcl}
			C_{5,2}&=&-\frac1{48} ad_{\partial}^{4} ad_{b_1}^{2} \partial={}
			-\frac1{48} ad_{\partial}^{4} [ b_1,b_{2}]\\
			&=&-\frac1{48}[[[[[b_{1},b_{2}],\partial],\partial],\partial],\partial]\\
			&=&-\frac1{48}(2[b_{3},b_{4}]+3[b_{2},b_{5}]+[b_{1},b_{6}])\\
		&=&-\frac1{48}([b_{5},b_{2}]+4[b_{4},b_{3}]+6[b_{3},b_{4}]+4[b_{2},b_{5}]+[b_{1},b_{6}]).
		\end{array}
	\end{myequation}
\end{example}
\begin{remark}\rm
If we set $a_{i}=b_{i}$ then for each $i$ then we have $ S^{\bullet}(t,x,y) = S^*(y + t,x)$, and so $S(t,x,y)=S^*(y + xt,x) Z(x,t)$.
\end{remark}

In $\Sym^{(2)}$, the series $Z(x,t)$ has a nice closed form
\begin{myequation}\label{cor2}
Z(x,t) =\exp{\left( - \sum_{i \geqslant 2} \frac{(i-1)}{i!} b_{i} t^i \right)}.
\end{myequation}
Indeed,
since the algebra is commutative  $ad_{\partial}^{i} ad_{b_1}^{j} \partial$ is non null only if $j=1$ and when $j=1$ formula (\ref{Cnk}) gives $[\partial,b_{i}]=-b_{i+1}$.

As a consequence, using equality  (\ref{cor2}) together with Theorem \ref{th1} and Formula (\ref{th3Mih}), we find
\begin{myequation}\label{ConsSG}
 S^\bullet(xt,x,y)=\exp\left(\sum_{j \geqslant 0} a_{j+1} \frac{x^j }{j!}y\right)\exp\left( \sum_{j \geqslant 1} jb_j \frac{x^j t}{j!}\right).
\end{myequation}
In other words, equaling the coefficients in the left and the right hand sides of (\ref{ConsSG}) we find \begin{myequation}
\tilde B^{r}_{n+k+r,k+r}={\binom{n+k}{n}}^{-1}B^{r}_{n+k+r,k+r}(a_{1},a_{2},\dots;b_{1},2b_{2},3b_{3},\dots).
\end{myequation}
 In the case where $r=0$, we obtain

\begin{align*}
\tilde B^{0}_{n+k,k}(a_{1},a_{2},\dots;b_{1},b_{2},\dots) &=& B^{k}_{n+k,k}(b_{1},b_{2},\dots;b_{1},b_{2},\dots) \\ 
&=& \binom{n+k}{n}^{-1}B_{n+k,k}(b_{1},2b_{2},3b_{3},\dots).
\end{align*}


\begin{thebibliography}{00}
\bibitem{BellHopf} A. Aboud, J.-P. Bultel, A. Chouria, J.-G. Luque, O. Mallet, {\it Word Bell Polynomials} arXiv preprint: 1402.2960 (2016).

\bibitem{bell1934exponential}E. T. Bell, {\it Exponential polynomials}, Annals of Mathematics, 258-277 (1934).

\bibitem{Chouria} A. Chouria, {\it Alg\`ebres de Hopf combinatoires sur les partitions d'ensembles et leurs g\'en\'eralisations : applications \`a{}
l'\'enum\'eration et \`a la physique th\'eorique}, PhD Universit\'e de Rouen (2016).

\bibitem{ncsf}I. Gelfand, D. Krob, A. Lascoux, B. Leclerc, V. S. Retakh, J-Y Thibon, {\it Noncommutative symmetric functions}, 
Adv. Math. 112, 218-348 (1995).

\bibitem{noncomm}K. Ebrahimi-Fard, A. Lundervold, D. Manchon, {\it Noncommutative Bell polynomials, quasideterminants and incidence Hopf algebras}, International Journal of Algebra and Computation 24 {\bf 05} 671-705 (2014).

\bibitem{MK} H. Munthe-Kaas, {\it  Lie-Butcher theory for Runge-Kutta methods},
 Numerical Mathematics, {\bf 35}:572-587 (1995).

\bibitem{mihoubi2013partial} M. Mihoubi, M. Rahmani, {\it The partial r-{B}ell polynomials}, arXiv preprint arXiv:1308.0863 (2013).

\bibitem{macdonald1998symmetric} I. G. Macdonald, {\it Symmetric functions and Hall polynomials}, Oxford university press (1998).

\bibitem{NT} J.-C. Novelli, J.-Y. Thibon, {\it Free quasi-symmetric functions and descent algebras for wreath products and noncommutative multi-symmetric functions} Discrete Mathematics, 310, 3584-3606 (2010).

\bibitem{magnus1954} W. Magnus, {\it On the exponential solution of differential equations for a linear operator},
 Communications on pure and applied mathematics 7 {\bf 4} 649-673 (1954).

\bibitem{ThesisMih} M. Mihoubi, {\it Polyn{\^o}mes multivari{\'e}s de Bell et polyn{\^o}mes de type binomial}, 
Th{\`e}se de Doctorat, L'Universit{\'e} des Sciences et de la Technologie Houari Boumediene (2008).

\bibitem{wolf1936symmetric} M. Wolf, {\it Symmetric functions of non-commutative elements},
 Duke Mathematical Journal 4 {\bf 2}  626-637 (1936).

\end{thebibliography}
\end{document}